\newtheorem{theorem}{Theorem}
\newtheorem{lemma}[theorem]{Lemma}
\newtheorem{proposition}[theorem]{Proposition}
\newcommand{\indep}{{\perp\!\!\!\perp}}
\newcommand{\depend}{{\perp\!\!\!\perp\!\!\!\!\!\! /\,\,}}
\date{}
\author{Xiangying Chen}
\address{Institut f\"{u}r Algebra und Geometrie, Otto von Guericke Universit\"at Magdeburg, Magdeburg, Germany}
\email{xiangying.chen@ovgu.de}
\subjclass[2020]{Primary 05B35; Secondary 52C40, 62B10, 62R01}
\title[An axiomatization of matroids as conditional independence models]{An axiomatization of matroids and oriented matroids as conditional independence models}
\begin{document}
	\begin{abstract}
		Matroids and semigraphoids are discrete structures abstracting and generalizing linear independence among vectors and conditional independence among random variables, respectively. Despite the different nature of conditional independence from linear independence, deep connections between these two areas are found and still undergoing active research. In this paper, we give a characterization of the embedding of matroids into conditional independence structures and its oriented counterpart, which lead to new axiom systems of matroids and oriented matroids.
	\end{abstract}
	\maketitle
	\section{Introduction}
	For a finite ground set $E$ we denote by \[\mathcal{A}_E:=\{ (ij|K):K\subseteq E,i\neq j\in E\backslash K \}\] the set of \emph{conditional independent statements} (\emph{CI-statements}) $(ij|K)$ on $E$. We usually use the ground set $[n]=\{1,\ldots,n\}$ and write $\mathcal{A}_n:=\mathcal{A}_{[n]}$ for convenience. A \emph{conditional independence structure} (\emph{CI-structure}) $\mathcal{G}\subseteq \mathcal{A}_E$ on $E$ is a subset of $\mathcal{A}_E$. 
	
		Notational convention: In this paper we use the ``Mat\'{u}\v{s} notation". Subsets (possibly empty) are denoted by upper case letters, singletons are denoted by lower case letters, concatenation of letters means disjoint union of sets. In particular, $i$ and $j$ are exchangeable in the notation $(ij|K)$ of a CI-statement. If the CI-structure $\mathcal{G}$ is clear in the context, we write $i\indep j|K$ iff $(ij|K)\in \mathcal{G}$ and $i\depend j|K$ iff $(ij|K)\notin \mathcal{G}$. If a condition is appeared as an axiom, it should be valid for all subsets and singletons of the ground set such that the expression makes sense, for example, subsets are assumed to be disjoint whenever they are written in concatenation.
	
	Let $\xi=(\xi_1,\ldots,\xi_n)$ be an $n$-dimensional random vector, and we denote ``random variables $\xi_i$ and $\xi_j$ are conditionally independent given $\{\xi_l:l\in K\}$" by $i\indep j|K$. Then the CI-structure $[[\xi]]:=\{(ij|K)\in\mathcal{A}_n: i\indep j|K\}$ satisfies the \emph{semigraphoid axiom} \cite{dawid1979conditional}
	\begin{enumerate}
		\item[(SG)] $i\indep j|K \wedge i\indep\ell |jK \Rightarrow i\indep\ell |K \wedge i\indep j|\ell K $.
	\end{enumerate}
	A CI-structure is a \emph{semigraphoid} if it satisfies the axiom (SG). As in the case of matroids and linear independence, there exist semigraphoids which does not come from the conditional independence among any random vector. The CI-structures coming from random vectors are not finitely axiomatizable \cite{studeny1992conditional}, however they are approximated by semigraphoids \cite{studeny1994semigraphoids}.
	
	In \cite{morton2009convex} a geometric characterization of semigraphoids is given and is referred to as \emph{convex rank tests}.  A CI-statement $(ij|K)\in\mathcal{A}_n$ is associated to the set of walls
	\begin{equation}\label{walls}
		\{\mathbf{x}\in\mathbb{R}^n:x_{k_1}\geq\cdots\geq  x_{k_s}\geq x_i=x_j\geq x_{\ell_1}\geq \cdots \geq x_{\ell_{n-s-2}}\}
	\end{equation}
 for $k_1\cdots k_s=K$ and $\ell_1 \cdots \ell_{n-s-2}=[n]\backslash ijK$, in the permutohedral fan $\Sigma_{A_n}$. Semigraphoids are exactly the subsets $\mathcal{G}$ of $\mathcal{A}_n$ such that the removal of all walls corresponding to the elements of $\mathcal{G}$ from the permutohedral fan $\Sigma_{A_n}$ results in a coarser fan. If the resulting fan is polytopal, that is, the normal fan of a polytope, then the semigraphoid is \emph{structural} or a \emph{semimatroid}, and the polytope is a \emph{generalized permutohedron} \cite{postnikov2009permutohedra}. Generalized permutohedra can be characterized in several ways \cite[Theorem~15.3]{postnikov2008faces}. A set function $h\colon 2^{[n]}\rightarrow \mathbb{R}$ is \emph{submodular} if $h(\emptyset)=0$ and
	\[
	h(A)+h(B)\geq h(A\cap B)+h(A\cup B)
	\]
	for any $A,B\subseteq [n]$. Submodular functions are bijectively corresponding to the generalized permutohedra as their support functions. Therefore a CI-structure $\mathcal{G}\subseteq \mathcal{A}_n$ is a semimatroid iff there is a submodular function $h\colon 2^{[n]}\rightarrow \mathbb{R}$ such that 
	\[
	\mathcal{G}=\{ (ij|K)\in\mathcal{A}_n: h(iK)+h(jK)=h(ijK)+h(K) \}=:[[h]].
	\]
	Semimatroids encode the combinatorial data of generalized permutohedra: Two generalized permutohedra define the same semimatroid iff their face lattices are same. The lattice of semimatroids ordered by inclusion is isomorphic to the dual of the face lattice of the cone of submodular functions \cite{studeny2016core}.
	
	Submodular functions appear and are well investigated in various branches of mathematics. They are important in the modeling of conditional independence as well \cite[Chap.~5]{studeny2006probabilistic}. If $\xi$ is an $n$-dimensional discrete random vector, that is, a random vector which takes finite number of values, and $h_\xi\colon 2^{[n]}\rightarrow \mathbb{R}$ maps every subset $S\subseteq [n]$ to the Shannon entropy of the subvector $\xi_S$, then $h$ is submodular and $[[\xi]]=[[h_\xi]]$ \cite{fujishige1978polymatroidal}. The same also holds for the measures with finite multiinformation \cite[Corollary~2.2]{studeny2006probabilistic}, which contain the class of discrete random vectors and the class of regular normally distributed random vectors. 
	
	A submodular function $r\colon 2^{[n]}\rightarrow \mathbb{Z}$ is the \emph{rank function} of a \emph{matroid} $M$ on $[n]$ if it is integer-valued, subcardinal $r(A)\leq |A|$ $\forall A\subseteq [n]$ and monotonic $r(A)\leq r(B)$ $\forall A\subseteq B\subseteq [n]$. A set $S \subseteq [n]$ is \emph{independent} in $M$ if $r(S)=|S|$ and otherwise \emph{dependent}. Minimal dependent sets are \emph{circuits} and maximal independent sets are \emph{bases} of $M$. The basis polytope $\mathcal{P}_M\subseteq [0,1]^n$ of the matroid $M$ is the convex hull of the indicator vectors $\mathbf{e}_B=\sum_{i\in B}\mathbf{e}_i\in\mathbb{R}^n$ of all bases $B$ of $M$. It is the generalized permutohedron whose support function is $r$. 
	
	One of the most fascinating facts of matroids is that they can be defined in hundreds of different ways. The axiom systems arise from various areas of mathematics, they are elegant and define different natural looking objects that are indeed equivalent, however the equivalence can be far from obvious. This equivalence of axiomatically defined objects is called the ``cryptomorphism". For instance, rank functions of matroids are cryptomorphic to polytopes whose vertex coordinates are 0 or 1 and edges are parallel to $\mathbf{e}_i-\mathbf{e}_j$, by taking their basis polytopes. Classic axiom systems of matroids and the cryptomorphic among them can be found in the Appendix in \cite{white1986theory}. Recently, matroids are proven to be cryptomorphic to Stanley-Reisner rings whose symbolic powers are Cohen-Macaulay \cite{varbaro2011symbolic,minh2011cohen} (see \cite{terai2012cohen} for more commutative-algebraic axiom systems), tropical varieties of degree 1 \cite{fink2013tropical}, supports of multiaffine Lorentzian polynomials \cite{branden2020lorentzian} and simplicial complexes whose combinatorial atlases are hyperbolic \cite{chan2022introduction}. In this paper, we show that matroids are cryptomorphic to semigraphoids satisfying a single additional axiom (MCI) and oriented matroids are cryptomorphic to oriented CI-structures satisfying the axioms (OCI1)-(OCI5). We assume that the reader is familiar with the terminologies and basic facts about matroids and oriented matroids, which can be found in \cite{oxley2006matroid,white1986theory,bjorner1999oriented}.

	\section{Matroids as CI-structures}

	A matroid $M$ on $[n]$ with rank function $r$ defines a CI-structure 
	$[[M]]:=[[r]]=\{ (ij|K)\in\mathcal{A}_n:r(iK)+r(jK)=r(ijK)+r(K) \}$. In other words, $[[M]]$ is the semimatroid of the submodular function $r$. 
	
	If $M$ is loopless, one can recover its rank function from its CI-structure $[[M]]$ recursively by $r(\emptyset)=0$, $r(\textrm{singleton})=1$ and
	\begin{equation}\label{recursion}
		r(ijK)=\begin{cases}
			r(iK)+r(jK)-r(K) & i \indep j |K\\
			r(iK)+r(jK)-r(K)-1 & i\depend j|K.
		\end{cases}
	\end{equation}
Alternatively, we can recover the independent sets $\mathcal{I}(M)$ of the matroid $M$ by 
\begin{equation}\label{indsetsmatroidfromci}
	\mathcal{I}(M)=\{S\subseteq [n]:\mathcal{A}_S\subseteq [[M]]\},
\end{equation}

where $\mathcal{A}_S:=\{ (ij|K):K\subseteq S,i\neq j\in S\backslash K \}$.

Replacing a loop $i\in [n]$ by a coloop and replacing a coloop $i$ by a loop correspond to the translation of matroid polytope by $-\mathbf{e}_i$ and $\mathbf{e}_i$, respectively. They do not affect the normal fan and thus the semimatroid stays unchanged. In this paper we consider loopless matroids as the representatives in every class of matroids on $[n]$ whose base polytopes have the same normal fan.

In \cite{matuvs1993probabilistic}, Mat\'{u}\v{s} connected conditional independence to matroid theory by embedding matroids into CI-structures and investigating their probabilistic representability, and introducing the matroid theoretic tools to the research of conditional independence. He stated that a CI-structure can be defined by a matroid iff the sets obtained by (\ref{indsetsmatroidfromci}) are the independent sets of a matroid whose CI-structure is again the original one. Here we characterize these CI-structures by axioms in type of inference rules.
	\begin{theorem}\label{matroidci}
		A CI-structure $\mathcal{G}\subseteq \mathcal{A}_n$ is defined by a loopless matroid $M$ iff it satisfies
		\begin{enumerate}
			\item[\normalfont{(MCI)}] $i\depend j|K\Rightarrow i\indep\ell |jKL$,
			\item[\normalfont{(SG)}] $i\indep j|K \wedge i\indep\ell |jK \Rightarrow i\indep\ell |K \wedge i\indep j|\ell K $.
		\end{enumerate}
		Moreover, the correspondence between matroids and CI-structures satisfying \normalfont{(MCI)} and \normalfont{(SG)} is one-to-one.
			\end{theorem}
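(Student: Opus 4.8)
The plan is to prove the two implications separately and read off uniqueness at the end; the forward direction is routine and the reconstruction of the matroid from the axioms is the substantial part.

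For the forward direction, let $M$ be loopless with rank $r$ and $\mathcal G=[[M]]=[[r]]$. Axiom (SG) holds for free, since $[[r]]$ is the semimatroid of the submodular function $r$ and every semimatroid is a semigraphoid. For (MCI) I would argue from the fact that matroid rank increments lie in $\{0,1\}$: writing $a=r(iK)-r(K)$, $b=r(jK)-r(K)$, $c=r(ijK)-r(K)$, monotonicity and submodularity give $a,b\in\{0,1\}$ and $\max(a,b)\le c\le a+b$, so the strict inequality $a+b>c$ defining $i\depend j\mid K$ forces $a=b=c=1$. Thus $r(ijK)=r(jK)$, i.e.\ $i\in\overline{jK}\setminus\overline{K}$, and by monotonicity of the closure $i\in\overline{jKL}$ for every $L$; hence the marginal rank of $i$ over $jKL$ vanishes and $i\indep\ell\mid jKL$ for every admissible $\ell$, which is exactly (MCI).

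For the converse, assume (MCI) and (SG) and set $\mathcal I:=\{S\subseteq[n]:\mathcal A_S\subseteq\mathcal G\}$ as in (\ref{indsetsmatroidfromci}). This family is downward closed because $\mathcal A_T\subseteq\mathcal A_S$ for $T\subseteq S$, and it contains $\emptyset$ and all singletons since $\mathcal A_S=\emptyset$ for $|S|\le1$; the latter makes the matroid to be constructed loopless. Two things remain: that $\mathcal I$ is the independent-set family of a matroid $M$, and that $[[M]]=\mathcal G$ for the resulting rank $r(S)=\max\{|I|:I\in\mathcal I,\ I\subseteq S\}$. I would obtain the matroid property by verifying that all maximal independent subsets of any $A\subseteq[n]$ have the same cardinality, i.e.\ the exchange axiom. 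The intended mechanism is that (MCI) expresses an upward monotonicity of dependence in the conditioning set (a failure of independence persists when the conditioning set grows, mirroring monotonicity of the matroid closure), while the contraction instance $i\indep a\mid K\wedge i\indep j\mid aK\Rightarrow i\indep j\mid K$ of (SG) lets one move a fixed element in and out of the conditioning set; together these should drive the circuit-elimination needed to equate the sizes of two maximal independent subsets.

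Having $\mathcal I$ a loopless matroid $M$, the last step is $[[M]]=\mathcal G$, which I would prove by induction on $|K|$, showing $(ij\mid K)\in\mathcal G$ if and only if $r(iK)+r(jK)=r(ijK)+r(K)$. The inclusion $\mathcal G\subseteq[[M]]$ follows by unwinding the definition of $\mathcal I$ and invoking (MCI) to see that a dependence $i\depend j\mid K$ behaves like $i\in\overline{jK}\setminus\overline{K}$ in $M$. The reverse inclusion $[[M]]\subseteq\mathcal G$ is the delicate one: here I must show $\mathcal G$ contains every statement that tightness of the submodular inequality predicts, using (SG) to reduce a statement at conditioning set $K$ to statements at smaller conditioning sets already controlled by the induction, and (MCI) to exclude the statements $M$ would omit. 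Finally uniqueness is immediate: any loopless $M'$ with $[[M']]=\mathcal G$ has, by (\ref{indsetsmatroidfromci}), independent sets $\{S:\mathcal A_S\subseteq\mathcal G\}=\mathcal I$, and a matroid is determined by its independent sets, so $M'=M$. I expect the main obstacle to be this converse, and within it the inclusion $[[M]]\subseteq\mathcal G$, where the two axioms must be orchestrated across conditioning sets of different sizes.
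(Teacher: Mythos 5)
Your forward direction is correct and is essentially the paper's own argument: dependence forces $r(iK)=r(jK)=r(ijK)=r(K)+1$, hence $i$ lies in the closure of $jKL$ for every $L$, which kills every statement $(i\ell|jKL)$. The problem is the converse, which in your proposal is a plan rather than a proof. Two essential claims are never established: (a) that $\mathcal I=\{S\subseteq[n]:\mathcal A_S\subseteq\mathcal G\}$ is the independent-set family of a matroid (exchange axiom), and (b) that the matroid $M$ it would define satisfies $[[M]]=\mathcal G$ --- you yourself flag $[[M]]\subseteq\mathcal G$ as ``the delicate one'' and ``the main obstacle'' without resolving it. Saying that (MCI) and (SG) ``should drive the circuit-elimination'' is not an argument; the entire content of the theorem lives exactly there.

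Moreover, the mechanism you propose for (a) rests on a false reading of (MCI). You gloss it as ``upward monotonicity of dependence in the conditioning set (a failure of independence persists when the conditioning set grows)''. That is false for matroid CI-structures: in $U_{1,3}$ (three parallel elements) one has $1\depend 2|\emptyset$ but $1\indep 2|3$, even though $[[U_{1,3}]]$ satisfies (MCI) and (SG). What (MCI) actually encodes is your own forward-direction observation read as an axiom: once $i\depend j|K$, the element $i$ behaves as if it were in the closure of $jK$, hence of $jKL$, so $i$ becomes independent of every \emph{third} element $\ell$ given $jKL$; the dependence of $i$ on $j$ itself need not persist. An exchange-axiom proof built on the false monotonicity will not go through. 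For comparison, the paper avoids independent sets entirely: it defines $r$ directly by the recursion (\ref{recursion}), proves by induction on cardinality (a three-case analysis using (SG) and (MCI)) that the recursion is well defined --- which already yields $\mathcal G=[[r]]$ by construction --- and then obtains unit increments, hence a matroid rank function, from the fact that (MCI) permits at most one dependence along a chain $(i_1i_2|),(i_1i_3|i_2),\ldots,(i_1i_s|i_2\cdots i_{s-1})$. If you wish to keep your route through (\ref{indsetsmatroidfromci}), you must actually prove the exchange property and both inclusions; doing so honestly amounts to redoing an induction of the same kind as the paper's.
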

		\begin{proof}
			Let $M$ be a matroid on $[n]$ with rank function $r$. The rank function axioms of matroids imply that $i\depend j|K$ in $[[M]]$ iff 
			\[
			r(iK)=r(jK)=r(ijK)=r(K)+1.
			\]
			From the second equality and submodularity $r(ijK)+r(jKL)\geq r(ijKL)+r(jK)$ and monotonicity $r(ijKL)\geq r(jkL)$ of $r$, we have $r(ijKL)=r(jKL)$. This contradicts $i\depend \ell |jKL$. So is (MCI) proven. The condition (SG) is the semigraphoid axiom. It is always satisfied by semimatroids.
			
			
			Now Let $\mathcal{G}\subseteq \mathcal{A}_n$ be a CI-structure satisfying (MCI) and (SG). We need to show that a function $r\colon 2^{[n]}\rightarrow \mathbb{N}$ is uniquely defined by the recursion (\ref{recursion}) and it is the rank function of a matroid. 
			
			To show the well-definedness of $r(S)$ we apply induction on the cardinality $c$ of $S$. For $c\in\{0,1\}$ the function value $r(S)$ is given by the initial condition. For $c=2$, $r(ij)=1$ if $i\depend j|$ and 2 otherwise. Assume that $c\geq 3$ and $r(S)$ is uniquely defined for any $S$ with $|S|<c$. What is left to check is that we got the same value $r(ijkL)$ for $|ijkL|=c$ by applying (\ref{recursion}) to different conditional (in)dependence statements, namely,
			\begin{align*}
				&\begin{cases}
					r(ikL)+r(jkL)-r(kL) & i \indep j |kL\\
					r(ikL)+r(jkL)-r(kL)-1 & i\depend j|kL
				\end{cases}\\
			=&\begin{cases}
				r(ijL)+r(jkL)-r(jL) & i \indep k |jL\\
				r(ijL)+r(jkL)-r(jL)-1 & i \depend k |jL.
			\end{cases}
			\end{align*}
		\begin{enumerate}
			\item $i \indep j |kL$ and $i \depend k |jL$: From (SG) we have $i\depend k|L$. By $i \depend k |jL$ and (MCI) we have $i\indep j|L$. Therefore
			\begin{align*}
				r(ikL)+r(jkL)-r(kL)&=r(iL)-r(L)-1+r(jkL)\\&=r(ijL)+r(jkL)-r(jL)-1.
			\end{align*}
		The case $i \indep k |jL$ and $i \depend j |kL$ follows by symmetry.
			\item  $i \depend j |kL$ and $i \depend k |jL$: By (MCI) we have $i\indep j|L$ and $i\indep k|L$.
			\item  $i \indep j |kL$ and $i \indep k |jL$: If $i\indep j|L$, by (SG) we have $i\indep k|L$. If $i\depend j|L$, we have $i\depend k|L$ instead by (SG). In both cases the equality follows.
		\end{enumerate}
			
			To show that $r$ is the rank function of a matroid, what is left to show is $r(i_1\cdots i_s)-r(i_2\cdots i_s)\in \{0,1\}$. Applying (\ref{recursion}) $s-2$ times, we have
			\begin{align*}
				r(i_1\cdots i_s)-r(i_2\cdots i_s)=r(i_1)-r(\emptyset)-a=1-a,
			\end{align*}
		where 
		\[
		a=\left|\{ (i_1i_s|i_2\cdots i_{s-1}),(i_1i_{s-1}|i_2\cdots i_{s-2}),\ldots, (i_1i_2|) \}\backslash \mathcal{G}\right|.
		\]
		The condition (MCI) implies that at most one of \[
		(i_1i_s|i_2\cdots i_{s-1}), (i_1i_{s-1}|i_2\cdots i_{s-2}), \ldots,(i_1i_3|i_2), (i_1i_2|)
		\] can be a conditional dependent statement, i.e. not in $\mathcal{G}$, therefore $a$ is either 0 or 1. 		 	
		\end{proof}

We remark that sometimes it may be more convenient to write the condition (MCI) in the following form \[
i\indep j|K \vee i\indep \ell| jKL \quad \forall \, i\ell jKL\subseteq [n].
\] Since semigraphoids satisfying (MCI) are cryptomorphic to loopless matroids, we call such CI-structures \emph{matroids} as well, and we can read off the CI-structure from other formulations of the matroid. The following lemma will be used repeatedly. 
	\begin{lemma}\label{cdcircle}
		For any $K\subseteq [n]$ and $i\neq j\in [n]\backslash K$, $i\depend j|K$ iff there is a circuit $C$ such that $ij\subseteq C\subseteq ijK$ and every circuit in $ijK$ contains either both $i,j$ or none of them.
	\end{lemma}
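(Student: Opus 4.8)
The plan is to reduce the lemma to the rank characterization of conditional dependence already established in the proof of Theorem~\ref{matroidci}, namely that $i\depend j|K$ holds in $[[M]]$ if and only if
\[
r(iK)=r(jK)=r(ijK)=r(K)+1.
\]
Dividing out by $K$, this says exactly that $i$ and $j$ are non-loops of the contraction $M/K$ spanning a common rank-one flat, i.e.\ that $\{i,j\}$ is a two-element circuit of $M/K$. The whole statement will then follow by translating this single fact about $M/K$ into a statement about circuits of $M$ contained in $ijK$, using the standard description of the circuits of a contraction (every circuit of $M/K$ is a minimal nonempty set of the form $C\setminus K$ with $C$ a circuit of $M$).

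For the forward direction I would assume $i\depend j|K$. Then $\{i,j\}$ is a circuit of $M/K$, so there is a circuit $C$ of $M$ with $C\setminus K=\{i,j\}$, that is, $ij\subseteq C\subseteq ijK$; this supplies the required circuit. For the second condition I argue by contradiction: a circuit $C''\subseteq ijK$ meeting $\{i,j\}$ in exactly one element, say $i\in C''$ and $j\notin C''$, would satisfy $C''\subseteq iK$ with $i\in C''$, so that $i$ is spanned by $C''-i\subseteq K$ and hence $r(iK)=r(K)$, contradicting $r(iK)=r(K)+1$. Thus every circuit inside $ijK$ contains both or neither of $i,j$.

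For the converse I would assume there is a circuit $C$ with $ij\subseteq C\subseteq ijK$ and that every circuit contained in $ijK$ contains both or neither of $i,j$. The ``both-or-neither'' hypothesis forbids any circuit meeting $\{i,j\}$ in a single element; since $i$ being spanned by $K$ would produce such a circuit inside $iK$, we conclude $r(iK)=r(K)+1$, and symmetrically $r(jK)=r(K)+1$. The circuit $C$ then gives the last equality cheaply: because $C-j\subseteq iK$ spans $j$, we have $r(ijK)=r(iK)=r(K)+1$. All three rank equalities hold, so $i\depend j|K$.

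I expect no deep difficulty here; the only point requiring care is the bookkeeping in the ``both-or-neither'' clause, where one must observe that a circuit meeting $\{i,j\}$ in a single element is automatically contained in $iK$ (or $jK$) and therefore witnesses that $i$ (or $j$) is spanned by $K$, contradicting the rank equalities. Once the equivalence between $i\depend j|K$ and ``$\{i,j\}$ is a circuit of $M/K$'' is in place, the remaining work is a routine passage between circuits of $M$ and circuits of $M/K$.
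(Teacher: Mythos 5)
Your proof is correct, and it reaches the lemma by a genuinely different (though closely related) route from the paper's. Both arguments pivot on the same rank characterization, $i\depend j|K$ iff $r(iK)=r(jK)=r(ijK)=r(K)+1$, but you repackage it as the statement that $\{i,j\}$ is a circuit of the contraction $M/K$, and then translate through two standard facts: every circuit of $M/K$ has the form $C\backslash K$ with $C$ a circuit of $M$, and $r(iK)=r(K)$ holds iff some circuit through $i$ lies inside $iK$. The paper never leaves rank/submodularity calculus: it gets the both-or-neither clause from the failure of the submodular inequality $r(C)+r(K)\geq r(iK)+r(C\backslash i)$ for a circuit $i\in C\subseteq iK$; it gets the existence of a circuit through $ij$ from a coloop argument (if $i,j$ lay in no circuit of $ijK$ they would be coloops there, forcing $r(ijK)=r(iK)+1$ and hence $i\indep j|K$); and it proves the converse contrapositively, applying submodularity to $r(jK)+r(ijK')\geq r(ijK)+r(jK')$ to manufacture a circuit through exactly one of $i,j$ from $i\indep j|K$ together with a circuit $ijK'\subseteq ijK$. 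Your route buys conceptual transparency --- the lemma becomes the visibly true statement that conditional dependence given $K$ means $i,j$ form a parallel pair in $M/K$ --- plus a direct rather than contrapositive treatment of the converse, at the price of importing the contraction-circuit and closure-circuit facts; the paper's computation is self-contained, using nothing beyond monotonicity and submodularity of $r$, which is the language in which $[[M]]$ is defined in the first place.
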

	\begin{proof}
		Let $C$ be a circuit such that $i\in C\subseteq iK$. Then $r(C)=r(C\backslash i)$. Together with $r(iK)=r(K)+1$ it contradicts the submodularity
		\[
		r(C)+r(K)\geq r(iK)+r(C\backslash i).
		\]
		If there is no circuit in $ijK$ containing $i,j$, then $i$ and $j$ are coloops in $ijK$. Then $r(ijK)=r(iK)+1$, so we have $i\indep j|K$.
		
		Now assume that $i\indep j|K$ and there is a circuit $ijK'$ such that $K'\subseteq K$. We want to show that there exists a circuit containing exactly one of $i,j$. From $i\indep j|K$ we have 
		\[
		r(iK)+r(jK)=r(ijK)+r(K),
		\]
		and as $ijK'$ is a circuit, $r(ijK')=r(jK')$. By submodularity
		\[
		r(jK)+r(ijK')\geq r(ijK)+r(jK')
		\]
		and monotonicity, we have $r(jK)=r(ijK)$ and therefore $r(iK)=r(K)$. So there is a circuit $C'$ such that $i\in C'\subseteq iK$.
	\end{proof}

We summarize the following characterizations for the conditional dependence $(ij|K)\notin [[M]]$. They are merely straightforward reformulations of the statements stated previously.

\begin{proposition}\label{dependenceequiv}
	Let $M$ be a matroid on $[n]$ and $[[M]]\subseteq \mathcal{A}_n$ be the CI-structure associated to $M$. The following are equivalent for any $K\subseteq [n]$ and $i\neq j\in[n]\backslash K$:
	\begin{enumerate}
		\item $i\depend j|K$.
		\item $r(iK)=r(jK)=r(ijK)=r(K)+1$.
		\item There exists a circuit $C$ of $M$ such that $ij\subseteq C\subseteq ijK$ and every circuit in $ijK$ contains either both $i,j$ or none of them. 
		\item The set $ij$ is a cocircuit (or equivalently, $K$ is a hyperplane) in the restriction of $M$ to $ijK$.
		\item For any basis $B$ of $K$, $iB$ and $jB$ are bases of $ijK$.
		\item There exists a basis $B$ of $K$ such that $iB$ and $jB$ are bases of $ijK$.
		\item Any wall of $\Sigma_{A_n}$ corresponding to $(ij|K)$ by (\ref{walls}) is in some wall of the normal fan $\mathcal{N}(\mathcal{P}_M)$ of the base polytope $\mathcal{P}_M$ of $M$.
	\end{enumerate}
\end{proposition}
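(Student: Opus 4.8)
The plan is to route every equivalence through condition (2), the purely rank-theoretic statement $r(iK)=r(jK)=r(ijK)=r(K)+1$, since it is the most directly computable and already surfaced at the start of the proof of Theorem~\ref{matroidci}. For the hub equivalence (1)$\Leftrightarrow$(2) I would argue as follows. By definition of $[[M]]=[[r]]$, the statement $i\indep j|K$ is exactly the submodular equality $r(iK)+r(jK)=r(ijK)+r(K)$, so $i\depend j|K$ means this inequality is strict. Setting $a=r(iK)-r(K)$, $b=r(jK)-r(K)$, $c=r(ijK)-r(K)$, monotonicity and submodularity of the matroid rank force $a,b\in\{0,1\}$ together with $\max(a,b)\le c\le a+b$; the strict inequality $a+b>c$ then leaves only $a=b=1$ and $c=1$, which is precisely (2). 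This is the computation recorded in the proof of Theorem~\ref{matroidci}, and (1)$\Leftrightarrow$(3) is nothing but Lemma~\ref{cdcircle}, so these two spokes require no new work.

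For the matroid-theoretic reformulations I would pass to the restriction $M|ijK$, whose rank is $r(ijK)$. To get (2)$\Leftrightarrow$(4), note that (2) says $r(K)=r(ijK)-1$ and that adjoining either $i$ or $j$ to $K$ raises the rank; the latter means $K$ is closed in $M|ijK$, so $K$ is a flat of corank $1$, i.e.\ a hyperplane, and complementarily $ij$ is a cocircuit. Conversely a hyperplane has corank $1$ and each of its non-elements increases the rank, returning (2). For (2)$\Leftrightarrow$(5)$\Leftrightarrow$(6) I would use that any basis $B$ of $K$ satisfies $|B|=r(K)$ and $\mathrm{cl}(B)=\mathrm{cl}(K)$: under (2) the element $i$ lies outside $\mathrm{cl}(K)=\mathrm{cl}(B)$, so $iB$ is independent of size $r(K)+1=r(ijK)$, hence a basis of $ijK$, and likewise for $j$, giving (2)$\Rightarrow$(5); (5)$\Rightarrow$(6) is trivial; and (6)$\Rightarrow$(2) is read off immediately from $r(ijK)=|B|+1=r(K)+1$ and $r(iK)=r(jK)=r(K)+1$.

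The remaining equivalence (1)$\Leftrightarrow$(7) is the geometric dictionary recalled in the introduction. Since $[[M]]=[[r]]$ is the semimatroid of $r$ and $\mathcal{N}(\mathcal{P}_M)$ is the normal fan of $r$, the walls attached to $(ij|K)$ via (\ref{walls}) are exactly the ones removed when $\Sigma_{A_n}$ is coarsened to $\mathcal{N}(\mathcal{P}_M)$ precisely when $(ij|K)\in[[M]]$. Hence those walls instead persist, i.e.\ lie inside a wall of $\mathcal{N}(\mathcal{P}_M)$, exactly when $i\depend j|K$, which is (1).

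I expect the two steps demanding the most care to be (2)$\Leftrightarrow$(4) and (1)$\Leftrightarrow$(7). In (4) one must verify the closedness of $K$ in $M|ijK$—not merely its corank-$1$ condition—to legitimately invoke the word ``hyperplane'' and its cocircuit dual; this is where the rank increments coming from $i$ and $j$ do the real work. In (7) the obstacle is bookkeeping rather than depth: one has to convert the informal fan-coarsening description of semimatroids into the precise assertion that a wall survives the coarsening if and only if the corresponding CI-statement fails, and confirm that the direction of the correspondence matches dependence rather than independence.
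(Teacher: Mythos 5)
Your proposal is correct and follows essentially the route the paper intends: the paper offers no separate proof, remarking only that the equivalences are straightforward reformulations of earlier material, namely the rank identity (2) established at the start of the proof of Theorem~\ref{matroidci}, the circuit criterion of Lemma~\ref{cdcircle} for (3), and the fan-coarsening dictionary for semimatroids recalled in the introduction for (7). Your write-up simply makes these citations explicit and fills in the routine matroid arguments for (4)--(6), all of which check out.
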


In \cite{matuvs1993probabilistic}, the notions of matroid operations are adapted to conditional independence. Let $\mathcal{G}\subseteq \mathcal{A}_n$ be a CI-structure. The \emph{deletion} and \emph{contraction} of $\mathcal{G}$ by $A\subseteq [n]$ are
\begin{align*}
	&\mathcal{G}\backslash A:= \{ (ij|K)\in \mathcal{G}: ijK\subseteq [n]\backslash A \},\\
	&\mathcal{G}/ A:= \{ (ij|K)\in \mathcal{A}_{[n]\backslash A}: (ij|KA)\in\mathcal{G} \},
\end{align*}
respectively. They reflect the marginalization and conditioning in probability theory. The \emph{dual} of $\mathcal{G}$ is $\mathcal{G}^\ast:=\{ (ij|[n]\backslash ijK):(ij|K)\in\mathcal{G} \}$. The usual rules for matroid operations also work for CI-structure operations, e.g. deletion and contraction commute and are dual operations of each other. A CI-structure $\mathcal{G'}$ is a \emph{minor} of a CI-structure $\mathcal{G}$ if $\mathcal{G}'$ can be obtained from $\mathcal{G}$ by applying any sequence of restriction and contraction. The \emph{direct sum} of two CI-structures $\mathcal{G}_1\subseteq \mathcal{A}_{E_1}$ and $\mathcal{G}_2\subseteq \mathcal{A}_{E_2}$ is the CI-structure
\begin{align*}
\mathcal{G}_1\oplus\mathcal{G}_2=\{ (ij|K)\in \mathcal{A}_{E_1E_2}: i\in E_1,j\in E_2 &\textrm{ or } ij\subseteq E_1,(ij|K\cap E_1)\in \mathcal{G}_1\\&\textrm{ or } ij\subseteq E_2,(ij|K\cap E_2)\in \mathcal{G}_2\}
\end{align*}
on $E_1E_2$.

The operations on CI-structures are compatible with the operations on matroids, namely, $[[M\backslash A]]=[[M]]\backslash A$, $[[M/ A]]=[[M]]/ A$ and $[[M^\ast]]=[[M]]^\ast$ for any matroid $M$ on $[n]$ and any $A\subseteq [n]$, and $[[M_1\oplus M_2]]=[[M_1]]\oplus [[M_2]]$ for any matroids $M_1,M_2$ on disjoint ground sets \cite{matuvs1993probabilistic}. In particular, being a matroid is a minor-closed property of CI-structures. In \cite{matuvs1997conditional}, classes of CI-structures are investigated via forbidden minors. The CI-structures that are matroids, although having a short axiomatization by (MCI) and (SG), cannot be characterized by a finite set of forbidden minors.

\begin{theorem}
	The class of CI-structures that are matroids cannot be characterized by a finite set of forbidden minors. 
\end{theorem}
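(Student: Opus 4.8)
I would prove this by reducing it to the construction of infinitely many excluded minors, and then exhibiting such a family.

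First I would record the general principle governing forbidden-minor characterizations. Call a CI-structure $X$ an \emph{excluded minor} for the matroid property if $X$ is not a matroid but every proper minor of $X$ is. If a set $\mathcal{F}$ were to characterize the matroid CI-structures by forbidden minors, then $\mathcal{F}$ would have to contain, up to relabeling of the ground set, every excluded minor: an excluded minor $X$ is not a matroid, so by assumption it has a minor in $\mathcal{F}$; that minor is itself not a matroid, whereas every proper minor of $X$ \emph{is} a matroid, so the minor must be $X$ itself. Since being a matroid is a minor-closed property (as recalled above), it therefore suffices to produce infinitely many pairwise non-isomorphic excluded minors; as these are pinned down together with their ground sets, it is enough to exhibit, for infinitely many $n$, at least one excluded minor on $[n]$.

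Second, I would localize where large excluded minors can come from, and this is where the shape of the two axioms is decisive. A failure of \textnormal{(SG)} is always reducible: if $i\indep j|K$ and $i\indep\ell|jK$ hold while $i\depend\ell|K$, then after deleting the elements outside $ij\ell K$ and contracting all of $K$ one lands on a three-element minor that again violates \textnormal{(SG)}, hence is not a matroid. Thus any excluded minor on at least four elements is already a semigraphoid, and its only possible defect is a failure of \textnormal{(MCI)}. A failure of \textnormal{(MCI)} whose premise-side conditioning set $K$ in $(ij|K)$ is large is equally reducible: contracting $m\in K$ sends $i\depend j|K$ and $i\depend\ell|jKL$ to $i\depend j|(K\setminus m)$ and $i\depend\ell|j(K\setminus m)L$, so such failures contract down to bounded size. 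The one genuinely non-reducible source is the universally quantified set $L$ on the \emph{conclusion} side of \textnormal{(MCI)}: no premise forces $L$ to shrink, so a single failure can be stretched irreducibly across an unbounded ground set. I would build the family around exactly this asymmetry: on $[n]$ take $\mathcal{G}_n$ to have a bottom dependence $1\depend 2|\emptyset$ together with a single top dependence $1\depend 3|[n]\setminus\{1,3\}$, while keeping $1\indep 3|2L$ for every proper $L\subsetneq\{4,\ldots,n\}$ and $1\indep 2|L$ for every nonempty $L$, with the remaining statements determined by the requirement that $\mathcal{G}_n$ be a semigraphoid. This pair violates \textnormal{(MCI)}, so $\mathcal{G}_n$ is not a matroid, and the intermediate choices are engineered for minimality: deleting an element of $\{4,\ldots,n\}$ destroys the top dependence while all shorter statements $(13|2L)$ stay independent; contracting such an element makes the surviving top dependence maximal, so that \textnormal{(MCI)} becomes vacuous there, while $1\indep 2|m$ turns the bottom dependence unconditional and removes it; and deleting or contracting any of $1,2,3$ erases the violation outright. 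Since the ground sets $[n]$ have unbounded size, this yields infinitely many excluded minors.

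The main obstacle is to turn this sketch into an honest CI-structure and to verify minimality rigorously. Because an excluded minor on four or more elements must satisfy \textnormal{(SG)}, I cannot merely list the statements above and call the rest ``matroidal''; I must pin down the entire semigraphoid, namely the statements forced by closing the two seed dependences under \textnormal{(SG)}, and then check uniformly in $n$ that the result is genuinely a semigraphoid, that its unique failure of \textnormal{(MCI)} is the designed one, and that each of the $2n$ single-element deletions and contractions satisfies \textnormal{(MCI)}. The real danger is that the \textnormal{(SG)}-closure silently introduces a second failure of \textnormal{(MCI)} elsewhere, which would then survive into a proper minor and destroy minimality. Controlling this closure is the crux; the cleanest route, if it is available, is to realize $\mathcal{G}_n$ concretely enough that \textnormal{(SG)} is manifest, so that only \textnormal{(MCI)} and its behavior under the $2n$ minors remain to be checked.
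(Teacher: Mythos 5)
Your opening reduction---that it suffices to exhibit infinitely many pairwise non-isomorphic excluded minors, since any finite forbidden-minor set would have to contain them all---is correct and is exactly the logic underlying the paper's proof, and your structural analysis is also on target: for $n\geq 4$ an excluded minor must satisfy (SG), and every (MCI)-violating pair in it must have empty premise-side conditioning set (else a contraction inherits the violation) and full support $ij\ell L=[n]$ (else a deletion does). The genuine gap is in the construction itself, and the difficulty you defer as ``the crux'' is in fact fatal to the family as you specify it: your pins are inconsistent with (SG), so no semigraphoid realizes them. Concretely, since you pin $1\indep 2|j$ for every $j$, the (SG) instance with premises $1\indep j|$ and $1\indep 2|j$ and conclusion $1\indep 2|$ forces $1\depend j|$ (and symmetrically $2\depend j|$) for every $j\in\{3,\ldots,n\}$. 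Worse, the (SG) instance with $i=1$, $j=3$, $K=[n]\backslash\{1,3,\ell\}$ has conclusion $1\indep 3|[n]\backslash 13$ (your designated top dependence) and premises $1\indep 3|[n]\backslash\{1,3,\ell\}$ and $1\indep\ell|[n]\backslash 1\ell$; for $\ell\geq 4$ the first premise is one of the statements $(13|2L)$, $L$ proper, that you pin as independent, so the second premise must fail, i.e.\ $(1\ell|[n]\backslash 1\ell)$ must be \emph{dependent} for every $\ell\geq 4$. Hence ``a single top dependence'' is impossible: any semigraphoid compatible with your pins carries many bottom and many top dependences, and all your minimality checks, tailored to the sparse structure, would have to be redone for a structure you never write down.

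The repair is the route you correctly guess at the end: realize $\mathcal{G}_n$ so that (SG) is manifest, namely as a semimatroid, by intersecting two matroid CI-structures (equivalently, taking $[[h_1+h_2]]$ for a sum of two matroid rank functions). This is the paper's device: its excluded minors are semimatroids whose dependences are bottom statements together with \emph{all} top statements $\{(ij|K):ijK=[m]\}$. Pushing your own forcing computation to its natural conclusion gives the clean choice: let the dependences be \emph{all} bottom statements and \emph{all} top statements, that is, $\mathcal{G}_n=[[U_{1,n}]]\cap[[U_{n-1,n}]]$, the semimatroid of $r_{U_{1,n}}+r_{U_{n-1,n}}$. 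Then (SG) is free; (MCI) fails via $1\depend 2|$ and $1\depend 3|[n]\backslash 13$; and every one-element minor is computed in one line: each deletion $\mathcal{G}_n\backslash e$ equals $[[U_{1,n-1}]]$ (no top statement survives a deletion) and each contraction $\mathcal{G}_n/e$ equals $[[U_{n-2,n-1}]]$ (no bottom statement survives a contraction), both matroids, so minor-closedness of the matroid property finishes the argument. Note the summand really must be $U_{1,n}$ rather than something like $U_{1,2}\oplus U_{n-2,n-2}$ (the paper's stated choice): with the latter, the dependences $(12|K)$ survive contraction of any $j\geq 3$ alongside the top statements of $[n]\backslash j$, so $\mathcal{G}_n/j$ would still violate (MCI)---this is exactly the failure mode your ``empty lower conditioning set'' criterion is designed to exclude, and the all-bottom-plus-all-top family is the one that meets it.
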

\begin{proof}
	Let $\mathcal{G}_m:=\mathcal{A}_m\backslash \{ (12|),(ij|K):ijK=[m] \}$ for $m\geq 4$. In other words, $\mathcal{G}_m$ is the semimatroid defined by the sum of rank functions of matroids $U_{1,2}\oplus U_{m-2,m-2}$ and $U_{m-1,m}$, where $U_{r,m}$ is the rank-$r$ uniform matroid on $[m]$. Then $\mathcal{G}_m$ is not a matroid because (MCI) is violated by $(12|),(13|245\cdots m)\notin \mathcal{G}_m$. However, any proper minor of $\mathcal{G}_m$ is a matroid as $\mathcal{G}_m\backslash 1\cong \mathcal{G}_m\backslash 2\cong  [[U_{m-1}]]$, $\mathcal{G}_m\backslash i\cong [[U_{1,2}\oplus U_{m-3,m-3}]]$ for any $i\in [m]\backslash {1,2}$ and $\mathcal{G}/j\cong [[U_{m-2,m-1}]]$ for any $j\in[m]$.
\end{proof}

	\section{Oriented matroids as oriented CI-structures}
		An \emph{oriented CI-structure} on $[n]$ is a map $\sigma\colon\mathcal{A}_ n\rightarrow \{ -1,0,1 \}$. In \cite{boege2019geometry}, oriented gaussoids are introduced for modeling the signs of the partial correlations among regular normally distributed random vectors. General oriented CI-structures are introduced in \cite{boege2022gaussian}.
				
	 For notions regarding oriented matroids we use the notations in \cite{bjorner1999oriented}. A \emph{signed subset} of $[n]$ is a map $X\colon [n]\rightarrow \{-1,0,1\}$. Write $X^+:=X^{-1}(1)$, $X^-:=X^{-1}(-1)$ and $\underline{X}:=X^+X^-$. A collection $\mathcal{C}$ of signed subsets of $[n]$ is the set of \emph{signed circuits} of an \emph{oriented matroid} on $[n]$ if it satisfies
	\begin{enumerate}
		\item[\normalfont{(OC0)}] $\emptyset\notin \mathcal{C}$,
		\item[\normalfont{(OC1)}] $\mathcal{C}=-\mathcal{C}$,
		\item[\normalfont{(OC2)}] for all $X,Y\in\mathcal{C}$ with $\underline{X}\subseteq \underline{Y}$, either $X=Y$ or $X=-Y$,
		\item[\normalfont{(OC3)}] for all $X,Y\in \mathcal{C}$, $X\neq -Y$ and $e\in X^+\cap Y^-$ there is a $Z\in \mathcal{C}$ such that $Z^+\subseteq (X^+\cup Y^+)\backslash e$ and $Z^-\subseteq (X^-\cup Y^-)\backslash e$.
	\end{enumerate}
	If (OC0), (OC1) and (OC2) are satisfied, then (OC3) is equivalent to the following condition known as the \emph{strong elimination axiom}:
	\begin{enumerate}
		\item[\normalfont{(OC3')}] For all $X,Y\in \mathcal{C}$, $e\in X^+\cap Y^-$ and $f\in (X^+\backslash Y^-)\cup (X^-\backslash Y^+) $, there is a $Z\in \mathcal{C}$ such that $Z^+\subseteq (X^+\cup Y^+)\backslash e$, $Z^-\subseteq (X^-\cup Y^-)\backslash e$ and $f\in Z$.
	\end{enumerate}
	If $\mathcal{C}$ is a \emph{circuit signature} of a matroid $M$, that is, $\mathcal{C}$ consists of two opposite signed sets $X$ and $-X$ supported by $C$ for each circuit $C$ of $M$, then $\mathcal{C}$ clearly satisfies (OC0)-(OC2). In this case, we only need to check the $X,Y$ in (OC3) such that $\underline{X}$ and $\underline{Y}$ are a \emph{modular pair} in $M$, that is, $r(X)+r(Y)=r(X\cup Y)+r(X\cap Y)$. In this case, $Z$ is unique.
	
	Let $\mathcal{M}$ be an oriented matroid on $[n]$ and $\mathcal{C}$ be the set of signed circuits of $\mathcal{M}$. We associate an oriented CI-structure $\sigma_\mathcal{M}\colon\mathcal{A}_n\rightarrow\{-1,0,1\}$ to $\mathcal{M}$ by assigning $\sigma_\mathcal{M}(ij|K)=0$ whenever $(ij|K)\in [[\underline{\mathcal{M}}]]$, and otherwise $\sigma_\mathcal{M}(ij|K)=X(i)X(j)$ for any $X\in\mathcal{C}$ such that $ij\subseteq \underline{X}\subseteq ijK$. The following lemma ensures the well-definedness of $\sigma_\mathcal{M}$.
	
	\begin{lemma}\label{welldefomci}
		If $(ij|K)\notin [[\underline{\mathcal{M}}]]$, then $X(i)X(j)=Y(i)Y(j)$ for any $X,Y\in\mathcal{C}$ such that $ij\subseteq \underline{X},\underline{Y}\subseteq ijK$.
	\end{lemma}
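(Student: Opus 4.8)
The plan is to use the strong elimination axiom \normalfont{(OC3')} together with Lemma~\ref{cdcircle}. Since $(ij|K)\notin[[\underline{\mathcal{M}}]]$ means $i\depend j|K$ in the underlying matroid $\underline{\mathcal{M}}$, Lemma~\ref{cdcircle} tells us that every circuit of $\underline{\mathcal{M}}$ contained in $ijK$ contains either both of $i,j$ or neither. This is the structural fact I will contradict.

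First I would normalize the signs. Since the product $X(i)X(j)$ is unchanged when $X$ is replaced by $-X$, and likewise for $Y$, and since $i\in\underline{X}\cap\underline{Y}$, I may assume after replacing $X,Y$ by their negatives if necessary that $X(i)=Y(i)=1$. Under this normalization the desired equality $X(i)X(j)=Y(i)Y(j)$ becomes simply $X(j)=Y(j)$. I argue by contradiction, so suppose $X(j)\neq Y(j)$; without loss of generality $X(j)=1$ and $Y(j)=-1$, so that $j\in X^+\cap Y^-$.

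Next I apply \normalfont{(OC3')} to $X,Y$ with the eliminated element $e=j$ and the preserved element $f=i$. The element $f=i$ is admissible because $X(i)=1$ gives $i\in X^+$ and $Y(i)=1$ gives $i\notin Y^-$, hence $i\in X^+\backslash Y^-$. Strong elimination then produces a signed circuit $Z\in\mathcal{C}$ with $i\in\underline{Z}$, with $j\notin\underline{Z}$ (since $Z^+\subseteq(X^+\cup Y^+)\backslash j$ and $Z^-\subseteq(X^-\cup Y^-)\backslash j$), and with $\underline{Z}\subseteq(\underline{X}\cup\underline{Y})\backslash j\subseteq ijK\backslash j=iK$. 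Thus $\underline{Z}$ is a circuit of $\underline{\mathcal{M}}$ contained in $ijK$ that contains $i$ but not $j$, contradicting the consequence of Lemma~\ref{cdcircle} recorded above. Therefore $X(j)=Y(j)$, which is the asserted equality.

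The main point to get right is the verification that $f=i$ satisfies the hypothesis of \normalfont{(OC3')} after the sign normalization, and the bookkeeping that the eliminated circuit $Z$ stays inside $ijK$ while losing exactly the element $j$; once these are in place the contradiction with Lemma~\ref{cdcircle} is immediate. I would also remark that the same conclusion follows from the orthogonality of signed circuits and signed cocircuits: by Proposition~\ref{dependenceequiv}(4) the set $ij$ is a cocircuit of the restriction to $ijK$, so it supports a signed cocircuit $D$ with $\underline{D}=ij$; applying orthogonality to $X$ and $D$, whose supports meet exactly in $ij$, forces $X(i)D(i)=-X(j)D(j)$, whence $X(i)X(j)=-D(i)D(j)$ depends only on $D$ and not on the chosen circuit $X$.
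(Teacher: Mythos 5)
Your proof is correct and is essentially identical to the paper's: both normalize signs via \normalfont{(OC1)} so that $j\in X^+\cap Y^-$ and $i\in X^+\backslash Y^-$, apply the strong elimination axiom \normalfont{(OC3')} with $e=j$, $f=i$, and derive a circuit $i\in\underline{Z}\subseteq iK$ contradicting Lemma~\ref{cdcircle}. Your closing remark on circuit--cocircuit orthogonality is a valid alternative route, but the main argument matches the paper's step for step (and is in fact slightly more careful, since the paper's line ``$X(j)=Y(j)=-1$'' is a typo for the sign pattern you state).
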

\begin{proof}
	Suppose that $X(i)X(j)=-Y(i)Y(j)$. By (OC1), we can assume that $X(i)=Y(i)=1$ and $X(j)=Y(j)=-1$, that is, $j\in X^+\cap Y^-$ and $i\in X^+\backslash Y^-$. The strong elimination axiom (OC3') implies that there is a $Z\in \mathcal{C}$ such that $i\in \underline{Z}\subseteq iK$ and $j\notin \underline{Z}$, which contradicts Lemma~\ref{cdcircle}.
\end{proof}

We prove that oriented matroids can be axiomatized in terms of oriented conditional independence.
	\begin{theorem}
		An oriented CI-structure $\sigma\colon\mathcal{A}_ n\rightarrow \{ -1,0,1 \}$ is the associated oriented CI-structure $\sigma_\mathcal{M}$ of an oriented matroid $\mathcal{M}$ iff it satisfies
		\begin{enumerate}
			\item[\normalfont{(OCI1)}] $\sigma(ij|K)\neq 0\Rightarrow \sigma(i\ell |jKL)=0$,
			\item[\normalfont{(OCI2)}] $\sigma(ij|K)=\sigma(i\ell|jK)= 0\Rightarrow \sigma(ij|\ell K)=\sigma(i\ell|K)=0$,
			\item[\normalfont{(OCI3)}] $ \sigma(ij|K)\neq 0 \Rightarrow \sigma(ij|L)\in\{0,\sigma(ij|K)\}$ for any $L\subseteq K$ or $L\supseteq K$,
			\item[\normalfont{(OCI4)}] $\sigma(i\ell|K)\sigma(ij|K)\sigma(j\ell|K)\leq 0$,
			\item[\normalfont{(OCI5)}] $\sigma(i\ell|jK)\sigma(ij|\ell K)\sigma(j\ell|iK)\geq 0$.
		\end{enumerate}
	In this case, the oriented matroid $\mathcal{M}$ can be determined uniquely from $\sigma$ by first recovering the underlying matroid $\underline{\mathcal{M}}$, and then assigning to each circuit $C$ of $\underline{\mathcal{M}}$ two opposite signed circuits
	\begin{equation}\label{circuitsignature}
		\pm (\{c_0,c\in C:\sigma(cc_0|C\backslash cc_0)=1\},\{c\in C:\sigma(cc_0|C\backslash cc_0)=-1\}),
	\end{equation}
	where $c_0\in C$ can be chosen arbitrarily in each $C$.
	\end{theorem}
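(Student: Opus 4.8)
The plan is to prove both directions by separating the support from the sign, using Theorem~\ref{matroidci} for the former and Lemmas~\ref{cdcircle} and~\ref{welldefomci} for the latter. The first observation is that (OCI1) is literally (MCI) and (OCI2) is literally (SG), read on the support $\underline{\mathcal{G}}:=\{(ij|K):\sigma(ij|K)=0\}$ of $\sigma$. Hence, by Theorem~\ref{matroidci}, $\sigma$ satisfies (OCI1) and (OCI2) if and only if $\underline{\mathcal{G}}=[[\underline{\mathcal{M}}]]$ for a unique loopless matroid $\underline{\mathcal{M}}$. This simultaneously disposes of (OCI1) and (OCI2) in the ``only if'' direction and recovers the underlying matroid $\underline{\mathcal{M}}$ in the ``if'' direction.

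For the remaining axioms in the ``only if'' direction I assume $\sigma=\sigma_{\mathcal{M}}$. For (OCI3), if $\sigma(ij|K)\neq 0$ and $L\subseteq K$ with $\sigma(ij|L)\neq 0$, Lemma~\ref{cdcircle} yields a circuit $C$ with $ij\subseteq C\subseteq ijL\subseteq ijK$ witnessing both statements, so Lemma~\ref{welldefomci} forces $\sigma(ij|L)=X_C(i)X_C(j)=\sigma(ij|K)$; the case $L\supseteq K$ is symmetric, and if $\sigma(ij|L)=0$ the conclusion is immediate. For (OCI4), note $\sigma(ij|K)\neq 0$ says exactly that $\{i,j\}$ is a $2$-circuit of the contraction $\underline{\mathcal{M}}/K$; with all three hypotheses nonzero, $i,j,\ell$ lie in one parallel class of $\underline{\mathcal{M}}/K$, and the three pairwise signs are the signs of the three $2$-circuits of a parallel class of the oriented contraction, whose product consistency forces $-1\leq 0$. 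For (OCI5), the three hypotheses force (via Lemma~\ref{cdcircle}) a single circuit $C$ with $ij\ell\subseteq C\subseteq ij\ell K$ witnessing all three statements, so the product equals $X_C(i)^2X_C(j)^2X_C(\ell)^2=1\geq 0$.

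For the ``if'' direction, having recovered $\underline{\mathcal{M}}$, I define the candidate circuit signature $\mathcal{C}$ by~(\ref{circuitsignature}). For any circuit $C$ and distinct $c,c'\in C$, Lemma~\ref{cdcircle} gives $c\depend c'\,|\,C\backslash cc'$ (the only circuit inside $C$ is $C$ itself), so $\sigma(cc'|C\backslash cc')\neq 0$ and each $X_C$ has full support $C$. The key step applies (OCI5) with $K=C\backslash\{c,c',c_0\}$: since all three relevant values are nonzero, (OCI5) becomes the sign-product formula $\sigma(cc'|C\backslash cc')=\sigma(cc_0|C\backslash cc_0)\,\sigma(c'c_0|C\backslash c'c_0)$, that is, $X_C(c)X_C(c')=\sigma(cc'|C\backslash cc')$ under the normalization $X_C(c_0)=1$. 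This formula shows at once that~(\ref{circuitsignature}) is independent of $c_0$ up to a global sign, so $\mathcal{C}=\{\pm X_C\}$ is well defined. Once $\mathcal{C}$ is shown to be an oriented matroid $\mathcal{M}$, the identity $\sigma_{\mathcal{M}}=\sigma$ is immediate: on the support both vanish, and if $\sigma(ij|K)\neq 0$ a witnessing circuit $C$ (Lemma~\ref{cdcircle}) gives $\sigma_{\mathcal{M}}(ij|K)=X_C(i)X_C(j)=\sigma(ij|C\backslash ij)$, which equals $\sigma(ij|K)$ by (OCI3) since $C\backslash ij\subseteq K$ and the value is nonzero. Uniqueness follows the same way, $\underline{\mathcal{M}}$ being forced by the support and the signs by the sign-product formula.

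The main obstacle is to verify that $\mathcal{C}$ satisfies the signed circuit elimination (OC3), so that it is genuinely an oriented matroid; (OC0)--(OC2) hold automatically for a circuit signature, and for a circuit signature it suffices to check (OC3) on modular pairs $C_1,C_2$, where the eliminated circuit $C_3\subseteq(C_1\cup C_2)\backslash e$ is the unique one provided by $\underline{\mathcal{M}}$. My plan is to reduce each such elimination to (OCI4) by contraction. The operations $\sigma\backslash A$ and $\sigma/A$ (restricting, respectively shifting the conditioning by $A$) preserve (OCI1)--(OCI5), since these are universally quantified inference rules and a minor only restricts the set of instances; they induce the matroid minors $\underline{\mathcal{M}}\backslash A$, $\underline{\mathcal{M}}/A$, and by the sign-product formula together with (OCI3) they carry $\mathcal{C}$ to the circuit signature of the minor. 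Given the modular pair and elements $f\in C_1\cap C_3$, $g\in C_2\cap C_3$, I contract $A:=(C_1\cup C_2)\backslash\{e,f,g\}$; modularity should force $\{e,f,g\}$ to be a three-element parallel class of $\underline{\mathcal{M}}/A$ whose three $2$-circuits are the images of $C_1,C_2,C_3$. Applying (OCI4) to $\sigma/A$ and transporting the three values back through the sign-product formula and (OCI3) (each conditioning set $C_i\backslash\text{pair}$ is contained in $A$) then yields exactly the sign that $X_{C_3}$ must carry at $f,g$ relative to $X=X_{C_1}$ and $Y=X_{C_2}$; ranging over all such $f,g$ and combining these triangle relations shows $X_{C_3}$ is conformal to $X,Y$ off $e$, which is (OC3). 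The technical heart is precisely this reduction: proving via modularity that the contraction collapses $\{e,f,g\}$ to a parallel class with $C_1,C_2,C_3$ as its $2$-circuits, and checking that the signs transport consistently across all choices, including on $C_1\cap C_2\cap C_3$.
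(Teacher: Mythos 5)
Your handling of the support (via Theorem~\ref{matroidci}), of (OCI3) and (OCI5), of the sign-product formula coming from (OCI5), and of the final identity $\sigma=\sigma_{\mathcal{M}}$ all match the paper. Your contraction trick for the elimination axiom does work in the ``clean'' case: writing $X,Y,Z$ for the signed sets~(\ref{circuitsignature}) supported on $C_1,C_2,C_3$ with $e\in X^+\cap Y^-$, if $f\in C_1\backslash C_2$ and $g\in C_2\backslash C_1$, then (using that modularity gives $r(C_1\cup C_2)=|C_1\cup C_2|-2$, so by submodularity no circuit avoids both $f$ and $g$) the set $\{e,f,g\}$ is a parallel class of $\underline{\mathcal{M}}/A$, $A=(C_1\cup C_2)\backslash\{e,f,g\}$, whose three $2$-circuits are the images of $C_1,C_2,C_3$, and (OCI4) plus (OCI3) plus the sign-product formula yield $Z(f)Z(g)=X(f)Y(g)$. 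This recovers the paper's Cases 1 and 2, i.e.\ conformality of $Z$ with $X,Y$ on $C_1\,\triangle\, C_2$.

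The genuine gap is the case you yourself defer as ``the technical heart'': $z\in C_1\cap C_2\cap C_3$. This is exactly where (OC3) has content --- when $X(z)=Y(z)$ you must show $Z(z)$ equals that common sign --- and there your reduction provably does not produce the configuration you claim. Take $f=z\in C_1\cap C_2\cap C_3$ and $g\in C_2\backslash C_1$, and contract $A=(C_1\cup C_2)\backslash\{e,z,g\}$. Then $C_2\backslash A=\{e,z,g\}$ is a three-element dependent set, \emph{not} a circuit of $\underline{\mathcal{M}}/A$: the three $2$-circuits of the parallel class $\{e,z,g\}$ are the images of $C_1$ (namely $\{e,z\}$), of $C_3$ (namely $\{z,g\}$), and of a \emph{fourth} circuit $C_4$, the unique circuit in $(C_1\cup C_2)\backslash z$, whose image is $\{e,g\}$. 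So (OCI4) gives $[X(e)X(z)]\,[Z(z)Z(g)]\,[X_{C_4}(e)X_{C_4}(g)]=-1$, a relation involving the unknown sign of $X_{C_4}$ at $e$ --- an element of $C_1\cap C_2$, hence the same kind of unknown you are trying to control --- instead of $Y(z)$. Combining all available triangle relations of this type (over the triples $\{e,z,g\}$, $\{e,z,f\}$, $\{z,f,g\}$, etc.) only returns tautologies; the (OCI4)-relations alone never pin down $Z(z)$ against $X(z),Y(z)$. The paper closes this case by a different argument (its Case~3): assuming $X(z)=Y(z)=-Z(z)$, it derives a contradiction with~(\ref{circuitsignature}) by combining the (OCI5) sign-product identities \emph{inside} each of $\underline{X}$, $\underline{Y}$ and $C_3$, the already-established equality $Z(g)=Y(g)$, and (OCI3) comparisons through the ambient set $\underline{X}\cup\underline{Y}$ justified by a rank computation (the conditioning sets $C_3\backslash zf$ and $\underline{X}\backslash zf$ are incomparable, so (OCI3) cannot be applied to them directly). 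Without an argument of this kind, your verification of (OC3), and hence the ``if'' direction, is incomplete.
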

	\begin{proof}
		Let $\mathcal{M}$ be an oriented matroid. By the construction of $\sigma_\mathcal{M}$, $\sigma^{-1}(0)$ is the CI-structure associated to the underlying matroid $\underline{\mathcal{M}}$, (OCI1) and (OCI2) are guaranteed by Theorem~\ref{matroidci}. The condition (OCI3) follows by definition and Lemma~\ref{welldefomci}. 
		
		If $\sigma(i\ell|K)\sigma(ij|K)\sigma(j\ell|K)\neq 0$, then by Lemma~\ref{cdcircle}, there are signed circuits $X,Y$ of $\mathcal{M}$ such that $ij\subseteq \underline{X}$, $\ell\notin \underline{X}$, $i\ell \subseteq  \underline{Y}$ and $j\notin \underline{Y}$. By (OC1) we can assume $i\in X^+\cap Y^-$. Then by (OC3) there is a signed circuit $Z$ of $\mathcal{M}$ such that $Z^+\subseteq (X^+\cup Y^+)\backslash e$ and $Z^-\subseteq (X^-\cup Y^-)\backslash e$. It follows from $\sigma(j\ell|K)\neq 0$ that $j\ell\subseteq  \underline{Z}$ and
		\[
		\sigma(j\ell|K)=Z(j)Z(\ell)=X(j)Y(\ell)=X(i)\sigma(ij|K)Y(i)\sigma(i\ell|K)=\sigma(ij|K)\sigma(i\ell|K),
		\]
		so (OCI4) is proven. 
		
		If $\sigma(i\ell|jK)\sigma(ij|\ell K)\sigma(j\ell|iK)\neq 0$, then by Lemma~\ref{cdcircle}, for any signed circuit $X$ of $\mathcal{M}$ such that $\underline{X}\subseteq ij\ell K$, either $ij\ell\subseteq \underline{X}$ or $\{i,j,l\}\cap\underline{X}=\emptyset$, and there exists a signed circuit $X$ such that $ij\ell \subseteq \underline{X}\subseteq ij\ell K$. Let $X$ be such a signed circuit. Then we conclude (CI5) as
		\[
		\sigma(i\ell|jK)\sigma(ij|\ell K)\sigma(j\ell|iK)=X(i)^2X(j)^2X(\ell)^2=1.
		\]
		
		Now let $\sigma\colon \mathcal{A}_n\rightarrow\{-1,0,1\}$ be such that (OCI1)-(OCI5) are satisfied, and let $M$ be the loopless matroid obtained from the CI-structure $\sigma^{-1}(0)\subseteq \mathcal{A}_n$ by Theorem~\ref{matroidci}. Let $\mathcal{C}$ be the circuit signature (\ref{circuitsignature}) of $M$. It follows by (OCI5) that the element $c_0$ in $(\ref{circuitsignature})$ can be chosen arbitrarily in each $C$.
		
		Let $X,Y\in \mathcal{C}$ be such that $\underline{X}$ and $\underline{Y}$ are a modular pair in $M$. Let $e\in X^+\cap Y^-$. By the construction (\ref{circuitsignature}) we have $X(x)=\sigma(xe|\underline{X}\backslash xe)$ for all $x\in \underline{X}\backslash e$ and $Y(y)=-\sigma(ye|\underline{Y}\backslash ye)$ for all $y\in \underline{Y}\backslash e$. 
		
		Let $C$ be the unique circuit of $M$ such that $C\subseteq (\underline{X}\cup \underline{Y})\backslash e$. Explicitly, $C=[n]\backslash\operatorname{cl}^*(([n]\backslash (\underline{X}\cup \underline{Y}))e)$, where $\operatorname{cl}^\ast$ is the closure operator of the dual matroid of $M$.
		
		 Let $f\in \underline{X}\backslash \underline{Y}$ and $g\in\underline{Y}\backslash\underline{X}$. The existence of $f$ and $g$ are ensured by the incomparability of circuits of a matroid. Because $C$ is the unique circuit of $M$ in $(\underline{X}\cup \underline{Y})\backslash e$, by the strong circuit elimination axiom of matroids, we have $f,g\in C$. Let $Z$ be the signed subset of $[n]$ supported by $C$ with $Z(f)=X(f)=\sigma(ef|\underline{X}\backslash ef)$ which is defined by (\ref{circuitsignature}), that is,
		 \[
		 	Z(z)=\sigma(zf|C\backslash zf)\sigma(ef|\underline{X}\backslash ef).
		 \] 
		 We show that $Z$ is the desired circuit in the condition (OC3). 
		 
		 Case 1: If $z\in C\cap \underline{X}\backslash \underline{Y}$, then $\sigma(zf|(\underline{X}\cup \underline{Y})\backslash zf)\neq 0$ because
		 \[
		 r(\underline{X}\cup \underline{Y})=r((\underline{X}\cup \underline{Y})\backslash z)=r((\underline{X}\cup \underline{Y})\backslash f)
		 \] 
		 and 
		 \begin{align*}
		 	r(\underline{X}\cup \underline{Y})-1\leq  r((\underline{X}\cup \underline{Y})\backslash zf)&\leq r(\underline{X}\backslash zf)+r(\underline{Y})-r(\underline{X}\cap \underline{Y})\\&=r(\underline{X})-1+r(\underline{Y})-r(\underline{X}\cap \underline{Y})=r(\underline{X}\cup \underline{Y})-1.
		 \end{align*}
		 It follows from $\sigma(zf|C\backslash zf),\sigma(zf|\underline{X}\backslash zf),\sigma(zf|(\underline{X}\cup \underline{Y})\backslash zf)\neq 0$ and (OCI3) and (OCI5) that  
		 \begin{align*}
		 	Z(z)&=\sigma(zf|C\backslash zf)\sigma(ef|\underline{X}\backslash ef)=\sigma(zf|(\underline{X}\cup \underline{Y})\backslash zf)\sigma(ef|\underline{X}\backslash ef)\\&=\sigma(zf|\underline{X}\backslash zf)\sigma(ef|\underline{X}\backslash ef)=\sigma(ez|\underline{X}\backslash ez)=X(z).
		 \end{align*}
	 
	 Case 2: If $z\in C\cap \underline{Y}\backslash \underline{X}$, we need to show $Z(g)=Y(g)$, and the rest of this case is same as Case 1. It follows from (OCI3) and (OCI5) that
	\begin{align*}
		Z(g)&=Z(f)\sigma(fg|C\backslash fg)=\sigma(ef|\underline{X}\backslash ef)\sigma(fg|C\backslash fg)\\&=\sigma(ef|\underline{X}\backslash ef)\sigma(fg|(\underline{X}\cup \underline{Y})\backslash efg)\\&=-\sigma(ef|\underline{X}\backslash ef)\sigma(ef|(\underline{X}\cup \underline{Y})\backslash efg)\sigma(eg|(\underline{X}\cup \underline{Y})\backslash efg)\\&=-\sigma(ef|\underline{X}\backslash ef)\sigma(ef|\underline{X}\backslash ef)\sigma(eg|\underline{Y}\backslash eg)=-\sigma(eg|\underline{Y}\backslash eg)=Y(g),
	\end{align*}
where all $\sigma$-values are nonzero because of Lemma~\ref{cdcircle}.

Case 3: If $z\in C\cap \underline{X}\cap \underline{Y}$, suppose in the contrary that $X(z)=Y(z)=-Z(z)$. We have
\begin{align*}
	\sigma(ez|\underline{X}\backslash ez)=X(z)=-Z(z)&=	-\sigma(zf|C\backslash zf)\sigma(ef|\underline{X}\backslash ef)\\&=-\sigma(zf|C\backslash zf)\sigma(zf|\underline{X}\backslash zf)\sigma(ez|\underline{X}\backslash ez),
\end{align*}
thus $\sigma(zf|C\backslash zf)\sigma(zf|\underline{X}\backslash zf)=-1$. Similarly, $\sigma(zg|C\backslash zf)\sigma(zg|\underline{Y}\backslash zg)=-1$ because
\begin{align*}
	\sigma(ez|\underline{Y}\backslash ez)&=-Y(z)=Z(z)=Z(g)\sigma(zg|C\backslash zg)=Y(g)\sigma(zg|C\backslash zg)\\&=-\sigma(eg|\underline{Y}\backslash eg)\sigma(zg|C\backslash zg)=-\sigma(ez|\underline{Y}\backslash ez)\sigma(zg|\underline{Y}\backslash zg)\sigma(zg|C\backslash zg).
\end{align*}
Therefore,
\begin{align*}
	\sigma(fg|C\backslash fg)&=\sigma(zg|C\backslash zg)\sigma(zf|C\backslash zf)=\sigma(zg|\underline{Y}\backslash zg)\sigma(zf|\underline{X}\backslash zf)\\&=\sigma(eg|\underline{Y}\backslash eg)\sigma(ez|\underline{Y}\backslash ez)\sigma(ef|\underline{X}\backslash ef)\sigma(ez|\underline{X}\backslash ez)\\&=\sigma(eg|\underline{Y}\backslash eg)(-Y(z))\sigma(ef|\underline{X}\backslash ef)X(z)\\&=-\sigma(eg|\underline{Y}\backslash eg)\sigma(ef|\underline{X}\backslash ef)=-Z(f)Z(g),
\end{align*}
	which contradicts (\ref{circuitsignature}).

We have shown that an oriented CI-structure $\sigma$ satisfying (OCI1)-(OCI5) defines an oriented matroid $\mathcal{M}$. The fact $\sigma=\sigma_\mathcal{M}$ follows from (OCI3) and Lemma~\ref{cdcircle}, namely, if $\sigma(ij|K)\neq 0$, then for any circuits $C,C'$ of $\underline{\mathcal{M}}$ with $ij\subseteq C,C'\subseteq ijK$, we have $\sigma(ij|C\backslash ij)=\sigma(ij|K)=\sigma(ij|C'\backslash ij)$.
	\end{proof}
Oriented matroids are known to be cryptomorphic to chirotopes, which abstract the possible signs of Pl\"ucker coordinates of points in the Grassmannian. By the following Proposition, $\sigma_{\mathcal{M}}$ can be obtained directly from the chirotope of $\mathcal{M}$.
\begin{proposition}
	Let $\chi\colon\binom{[n]}{r}\rightarrow \{-1,0,1\}$ be the chirotope of an oriented matroid $\mathcal{M}$ on $[n]$. The oriented CI-structure $\sigma_\mathcal{M}\colon \mathcal{A}_n\rightarrow \{-1,0,1\}$ associated to $\mathcal{M}$ can be obtained from the chirotope $\chi$ by
	\[
	\sigma_\mathcal{M}(ij|K)=-\chi(i,b_1,\ldots,b_s,a_1,\ldots,a_{r-s-1})\chi(j,b_1,\ldots,b_s,a_1,\ldots,a_{r-s-1}),
	\]
	where $b_1\cdots b_s$ is a basis of $K$ and $a_1\cdots a_{r-s-1}$ is a subset of $[n]\backslash ijK$ such that $\chi(i,b_1,\ldots,b_s,a_1,\ldots,a_{r-s-1})\neq 0$ whenever it exists, and $\sigma_\chi(ij|K)=0$ if no such $a_1\cdots a_{r-s-1}$ exists.
\end{proposition}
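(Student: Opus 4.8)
The plan is to read $\sigma_\mathcal{M}(ij|K)$ off the standard chirotope--circuit cryptomorphism \cite[\S3.5]{bjorner1999oriented}, which recovers from $\chi$ both the underlying matroid $\underline{\mathcal{M}}$ and the signs of its signed circuits. By Proposition~\ref{dependenceequiv}, $\sigma_\mathcal{M}(ij|K)\neq 0$ precisely when $i\depend j|K$, so I would split into the dependent and the vanishing case and, in the former, exhibit an explicit signed circuit $X$ that is admissible in the defining formula $\sigma_\mathcal{M}(ij|K)=X(i)X(j)$ (i.e.\ $ij\subseteq\underline X\subseteq ijK$) and whose sign product is then evaluated against $-\chi(i,\mathbf b,\mathbf a)\chi(j,\mathbf b,\mathbf a)$. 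Throughout write $\mathbf b=b_1\cdots b_s$ and $\mathbf a=a_1\cdots a_{r-s-1}$.

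Suppose $i\depend j|K$. By Proposition~\ref{dependenceequiv}(2) we have $r(iK)=r(ijK)=r(K)+1$, so $\{i,\mathbf b\}$ is independent and $\operatorname{cl}(iK)=\operatorname{cl}(ijK)\ni j$. Consequently $[n]\setminus j$ spans $\mathcal{M}$, so $\{i,\mathbf b\}$ extends to a basis $B=\{i,\mathbf b,\mathbf a\}$ using only elements of $[n]\setminus ijK$; thus an admissible $\mathbf a$ exists and $\chi(i,\mathbf b,\mathbf a)\neq 0$. By Proposition~\ref{dependenceequiv}(5) the sets $\{i,\mathbf b\}$ and $\{j,\mathbf b\}$ span the same flat, whence $\{j,\mathbf b,\mathbf a\}$ is a basis as well and $\chi(j,\mathbf b,\mathbf a)\neq 0$. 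The geometric heart of the argument is to locate the fundamental circuit $C=C(j,B)$: it contains $i$, because $j\notin\operatorname{cl}(\{\mathbf b,\mathbf a\})$ (otherwise $\{j,\mathbf b,\mathbf a\}$ would be dependent), and it avoids every $a_\ell$, because $j\in\operatorname{cl}(iK)\subseteq\operatorname{cl}(B\setminus a_\ell)$. Hence $ij\subseteq C\subseteq\{i,j\}\cup\mathbf b\subseteq ijK$, so the signed circuit $X$ supported on $C$ is a legitimate witness and $\sigma_\mathcal{M}(ij|K)=X(i)X(j)$ by definition and Lemma~\ref{welldefomci}.

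It remains to compute $X(i)X(j)$ from $\chi$. Ordering the basis as $B=(i,\mathbf b,\mathbf a)$, the chirotope--circuit cryptomorphism expresses the fundamental circuit of $j$ over $B$ by $X(i)=-\chi(i,\mathbf b,\mathbf a)\chi(j,\mathbf b,\mathbf a)$ under the normalization $X(j)=+1$; since $X(i)X(j)$ is invariant under $X\mapsto -X$, this gives the asserted identity, and its independence of the choices of $\mathbf b$ and $\mathbf a$ is consistent with Lemma~\ref{welldefomci}. For the vanishing case the second paragraph already shows that $i\depend j|K$ forces an admissible $\mathbf a$ to exist, so contrapositively the absence of such $\mathbf a$ yields $i\indep j|K$ and the value $0$, as claimed. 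The two points I expect to demand the most care are, first, confirming that the right-hand side still returns $0$ in those configurations with $i\indep j|K$ in which an admissible $\mathbf a$ nevertheless exists---here one must use Proposition~\ref{dependenceequiv} and the closure operator to control which of the factors $\chi(i,\mathbf b,\mathbf a)$, $\chi(j,\mathbf b,\mathbf a)$ is forced to vanish---and, second, the support containment $\underline X\subseteq ijK$ in the dependent case together with the sign and ordering bookkeeping in the cryptomorphism, which is what produces the global factor $-1$.
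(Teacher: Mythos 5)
Your handling of the dependent case is correct and is essentially the paper's argument: when $i\depend j|K$ you produce the circuit $C$ with $ij\subseteq C\subseteq ij\mathbf{b}\subseteq ijK$ (Lemma~\ref{cdcircle}), check it is an admissible witness for $\sigma_\mathcal{M}(ij|K)=X(i)X(j)$, and evaluate the signs by the chirotope--circuit cryptomorphism; the paper does the same, simply citing \cite[Lemma~3.5.7]{bjorner1999oriented} where you re-derive the fundamental-circuit sign formula. (One small imprecision: spanning of $[n]\setminus j$ is not quite enough, since $\mathbf{a}$ must also avoid $K\setminus\mathbf{b}$; this is repaired by noting $K\subseteq\operatorname{cl}(\mathbf{b})$ and $j\in\operatorname{cl}(i\mathbf{b})$, so $i\mathbf{b}\cup([n]\setminus ijK)$ spans $\mathcal{M}$.)

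The genuine gap is exactly the sub-case you flagged and postponed: showing the right-hand side returns $0$ when $i\indep j|K$ yet an admissible $\mathbf{a}$ exists. Your plan --- ``control which of the factors $\chi(i,\mathbf{b},\mathbf{a})$, $\chi(j,\mathbf{b},\mathbf{a})$ is forced to vanish'' --- cannot be carried out, because neither factor need vanish. Take $\mathcal{M}$ to be any orientation of $U_{2,3}$ on $\{1,2,3\}$ (say realized by $v_1=(1,0)$, $v_2=(0,1)$, $v_3=(1,1)$), with $i=1$, $j=2$, $K=\emptyset$. Then $1\indep 2|\emptyset$ in $[[U_{2,3}]]$, so $\sigma_\mathcal{M}(12|\emptyset)=0$; but $\mathbf{b}=\emptyset$, $\mathbf{a}=(3)$ is admissible since $\chi(1,3)\neq0$, and $-\chi(1,3)\chi(2,3)=\pm1\neq0$. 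The root cause is that nonvanishing of both factors only certifies that $i\mathbf{b}\mathbf{a}$ and $j\mathbf{b}\mathbf{a}$ are bases of $\mathcal{M}$, which does not imply $i\depend j|K$: dependence additionally requires $ij\mathbf{b}$ to be dependent (Proposition~\ref{dependenceequiv}(2)). So the step you deferred is not merely delicate, it is false as stated, and any complete proof must first amend the formula, e.g.\ by also demanding that $\chi(i,j,\mathbf{b},\mathbf{a}')=0$ for every $(r-s-2)$-subset $\mathbf{a}'$ before applying the sign formula. You should know that the paper's own proof buries the same issue in the sentence asserting it is ``easy to see'' that $\sigma_\chi(ij|K)=0$ iff some basis $\mathbf{b}$ of $K$ makes $i\mathbf{b}$ or $j\mathbf{b}$ fail to be a basis of $ijK$: the ``if'' direction of that equivalence fails on the example above, so your instinct that this point needs real care exposed an actual defect in the statement rather than a routine verification.
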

\begin{proof}
	Let $\sigma_\chi(ij|K)$ be the oriented CI-structure defined in the Proposition. It is easy to see that $\sigma_\chi(ij|K)=0$ iff there is a basis $b_1\cdots b_s$ of $K$ such that $iK$ or $jK$ is not a basis of $ijK$ in $\underline{\mathcal{M}}$. By Proposition~\ref{dependenceequiv}, this is equivalent to $(ij|K)\in[[\underline{\mathcal{M}}]]$. Now assume that $(ij|K)\notin [[\underline{\mathcal{M}}]]$. Let $C$ be the circuit in $\underline{\mathcal{M}}$ such that $ij\subseteq C\subseteq ijb_1\cdots b_s$, then $\sigma_\chi(ij|K)=\sigma_\chi(ij|C\backslash ij)$ coincides with the sign $\sigma(i,j)$ defined in \cite[Lemma~3.5.7~(i)]{bjorner1999oriented}, which gives back the original oriented matroid $\mathcal{M}$ by \cite[Lemma~3.5.7~(ii) and 3.5.10]{bjorner1999oriented}, therefore we have $\sigma_\chi=\sigma_{\mathcal{M}}$. The well-definedness of $\sigma_\chi$ follows as well.
\end{proof}
	\section{Remarks on representations of matroids}
	In \cite{matuvs1993probabilistic}, Mat\'u\v{s} introduced representations of matroids in probability theory and information theory which lead to new insight into representability of matroids. A matroid $M$ is \emph{probabilistically representable} if its CI-structure coincides with the conditional independence among a discrete random vector $\xi$, that is, $[[M]]=[[\xi]]=[[h_\xi]]$. As the rank function of any connected matroid spans an extreme ray in the cone of submodular functions, the probabilistic representability is equivalent to the entropicness in the case of connected matroids, where a matroid is \emph{entropic} if its rank function is a positive multiple of the entropy function of some discrete random vector. Linear and multilinear matroids are entropic \cite[Lemma~10]{matuvs1997conditional}, however the direct sum of Fano matroid and non-Fano matroid is algebraic but not entropic \cite{matuvs2018classes}. The rank function of any algebraic matroids is the pointwise limit of a sequence of entropy functions \cite{matuvsalgebraic}, such matroids are called \emph{almost entropic} \cite{matus2006two}. The dual of an almost entropic matroid is not necessarily almost entropic \cite{kaced2018information}, while a long standing open problem in matroid theory is whether the dual of any algebraic matroid is algebraic. 
	
	A Gaussian conditional independence, however, can never represent an interesting matroid. A semigraphoid $\mathcal{G}\subseteq \mathcal{A}_n$ is a \emph{gaussoid} if it satisfies the following axioms:
	\begin{enumerate}
			\item[\normalfont{(Int)}] $\{(ij|kL),(ik|jL)\}\subseteq \mathcal{G}\Rightarrow \{(ij|L),(ik|L)\}\subseteq \mathcal{G}$,
	\item[\normalfont{(Comp)}] $\{(ij|L),(ik|L)\}\subseteq \mathcal{G}\Rightarrow \{(ij|kL),(ik|jL)\}\subseteq \mathcal{G}$,
\item[\normalfont{(WT)}] $\{(ij|L),(ij|kL)\}\subseteq \mathcal{G}\Rightarrow (ik|L) \textrm{ or } (jk|L)\in\mathcal{G}$.
	\end{enumerate}
Gaussoids are introduced in \cite{lnvenivcka2007gaussian} as an abstraction of the conditional independence among regular normally distributed random variables: If $\xi$ is an $n$-dimensional regular normally distributed random vector whose covariance matrix is $\Sigma$, then $[[\xi]]=\{(ij|K)\in\mathcal{A}_n:\det\Sigma_{iK,jK}=0\}=:[[\Sigma]]$ is a gaussoid. The CI-structures coming from regular normally distributed random vectors are not finitely axiomatizable \cite{sullivant2009gaussian}, they are axiomatically approximated by gaussoids \cite{boege2022gaussoids}. In \cite[\S 7]{mohammadi2018generalized} it was computationally verified that for $3\leq n\leq 8$ no CI-structure corresponding to a connected matroid on $[n]$ is a gaussoid. We confirm this computation by showing that a loopless matroid is a gaussoid iff it is a direct sum of copies of uniform matroids $U_{1,1}$ and $U_{1,2}$.

	\begin{proposition}
		Let $M$ be a loopless matroid. Then $[[M]]$ is a gaussoid iff $M\cong U_{1,1}^{\oplus m_1}\oplus U_{1,2}^{\oplus m_2}$.
	\end{proposition}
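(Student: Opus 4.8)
The plan is to prove both implications through the connected components of $M$, exploiting the compatibility of direct sums and minors with the passage $M\mapsto [[M]]$ recorded earlier in the paper.

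For the forward (``if'') direction I would first record that $[[U_{1,1}]]$ and $[[U_{1,2}]]$ are both the empty CI-structure: $U_{1,1}$ carries no CI-statement, and for $U_{1,2}$ the only possible statement $(12|\emptyset)$ is dependent since $r(1)+r(2)=2>1=r(12)+r(\emptyset)$. Feeding this into the direct sum formula for CI-structures shows that for $M=U_{1,1}^{\oplus m_1}\oplus U_{1,2}^{\oplus m_2}$ one has $i\indep j|K$ for every $K$ unless $i,j$ are the two elements of a common $U_{1,2}$ summand, in which case $i\depend j|K$ for every $K$. With this explicit description the axioms (SG), (Int) and (Comp) are immediate, because each of their hypotheses and conclusions only asserts independence of pairs $\{i,j\}$ or $\{i,k\}$ that are thereby forced to lie in distinct summands. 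The only axiom requiring an argument is (WT): if $(ij|L)$ and $(ij|kL)$ lie in $[[M]]$ then $i,j$ are not parallel, and were both $(ik|L)$ and $(jk|L)$ to fail, the element $k$ would be parallel to both $i$ and $j$; this is impossible because a $U_{1,2}$ summand has exactly two elements, so $k$ lies in at most one parallel pair.

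For the converse (``only if'') direction I would use forbidden minors. Gaussoids are closed under the CI-structure operations of deletion and contraction, which one checks by translating each gaussoid axiom through $\mathcal{G}/a=\{(ij|K):(ij|Ka)\in\mathcal{G}\}$ and through restriction $\mathcal{G}\backslash A$; these operations are compatible with matroid minors via $[[M\backslash A]]=[[M]]\backslash A$ and $[[M/A]]=[[M]]/A$. Hence if $[[M]]$ is a gaussoid, then $[[N]]$ is a gaussoid for every minor $N$ of $M$. The two obstructions are the dual pair $U_{1,3}$ and $U_{2,3}$: a direct computation gives $[[U_{1,3}]]=\{(12|3),(13|2),(23|1)\}$, which violates (Int) (the statements $(12|3),(13|2)$ lie in $[[U_{1,3}]]$ but $(12|\emptyset)$ does not), and $[[U_{2,3}]]=\{(12|\emptyset),(13|\emptyset),(23|\emptyset)\}$, which violates (Comp) (here $(12|\emptyset),(13|\emptyset)$ lie in $[[U_{2,3}]]$ but $(12|3)$ does not). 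So a matroid whose CI-structure is a gaussoid can contain neither $U_{1,3}$ nor $U_{2,3}$ as a minor.

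It remains to see that these two exclusions force every connected component of $M$ to have at most two elements. Here I would invoke Tutte's theorem that a connected matroid $N$ with $|E(N)|\geq 2$ has, for each element $e$, at least one of $N\backslash e$, $N/e$ connected. Starting from a connected component on at least three elements and repeatedly deleting or contracting while preserving connectedness, one reaches a connected three-element minor; since a loop is an isolated component, this minor is loopless and is therefore $U_{1,3}$ or $U_{2,3}$. Thus a gaussoid forbids connected components of size $\geq 3$, and a loopless connected matroid on one or two elements is exactly $U_{1,1}$ or $U_{1,2}$, yielding $M\cong U_{1,1}^{\oplus m_1}\oplus U_{1,2}^{\oplus m_2}$. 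I expect the main obstacle to be precisely this reduction of an arbitrary connected matroid to a three-element connected minor: the component-restriction and the base-case axiom checks are routine, whereas controlling connectivity under deletion and contraction is where Tutte's theorem is essential, and where one must be careful to ensure the resulting three-element minor is loopless and hence one of the two forbidden configurations.
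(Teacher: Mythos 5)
Your proof is correct, but it takes a genuinely different route from the paper's in both directions. For the ``if'' direction the paper does not verify the axioms combinatorially at all: it observes that $[[U_{1,1}]]$ and $[[U_{1,2}]]$ are the CI-structures of explicit positive definite covariance matrices and invokes the fact that Gaussian CI-structures are closed under direct sums, so the resulting structure is a gaussoid by Gaussian representability. Your route --- describing $[[M]]$ explicitly (a pair is dependent given every conditioning set iff it forms a $U_{1,2}$ summand, and independent given every conditioning set otherwise) and then checking (SG), (Int), (Comp) trivially and (WT) via the fact that parallel classes have size at most two --- is more self-contained, since it avoids citing the external Gaussian direct-sum lemma. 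For the ``only if'' direction the divergence is larger: the paper stays entirely inside the CI calculus, deriving from (MCI), (SG), (Int) and (Comp) three inference rules (the dependence $i\depend j|K$ is unchanged under enlarging or shrinking the conditioning set $K$, and $i\depend j|\emptyset$ forces $i\indep \ell|\emptyset$ for every other $\ell$), which partition the ground set into dependent pairs and singletons and immediately yield $M\cong U_{1,1}^{\oplus m_1}\oplus U_{1,2}^{\oplus m_2}$. You instead argue by forbidden minors: gaussoids are minor-closed (your reduction of this to a routine translation of each axiom through deletion and contraction is correct), $[[U_{1,3}]]=\{(12|3),(13|2),(23|1)\}$ violates (Int), $[[U_{2,3}]]=\{(12|\emptyset),(13|\emptyset),(23|\emptyset)\}$ violates (Comp), and Tutte's deletion/contraction connectivity theorem collapses any connected component with at least three elements onto one of these two obstructions; your observation that the resulting three-element connected minor is automatically loopless (and coloop-free), hence $U_{1,3}$ or $U_{2,3}$, is exactly the point that needs care there. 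The trade-off: the paper's converse is elementary and needs no matroid structure theory, whereas yours imports Tutte's theorem and the minor-closedness of gaussoids; in exchange, yours identifies the two minimal obstructions explicitly --- extra structural information very much in the spirit of the paper's own forbidden-minor theorem for matroid CI-structures --- and your forward direction removes the reliance on the Gaussian representability lemma.
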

\begin{proof}
	The necessity follows from the observation that $[[U_{1,1}]]=[[(1)]]$ and $[[U_{1,2}]]=[[\left(\begin{matrix}
		1 & 0.1\\
		0.1 & 1
	\end{matrix}\right)]]$ are gaussoids and $[[\Sigma_1]]\oplus[[\Sigma_2]]=[[\Sigma_1\oplus\Sigma_2]]$ for any positive definite matrices $\Sigma_1,\Sigma_2$ \cite[Lemma~3.14]{boege2022gaussian}.
	
	Now assume that $M$ is a loopless matroid on the ground set $E$ such that $[[M]]$ is a gaussoid. We deduce the following three inference rules for any $ij\ell K\subseteq E$ from (MCI), (SG), (Int) and (Comp):

Claim 1. $i\depend j|K \Rightarrow i\depend j|\ell K$. It follows by (MCI) from $i\depend j|K$ that $i\indep \ell|jK$ and $j\indep \ell|iK$. And by (Int) we get $i\indep \ell|K$. However by (SG) we have $i\depend \ell|K$ or $i\depend j|\ell K$ from $i\depend j|K$. Thus we have $i\depend j|\ell K$.

Claim 2. $i\depend j|\ell K \Rightarrow i\depend j| K$. It follows by (MCI) from $i\depend j|\ell K$ that $i\indep \ell|K$ and $j\indep \ell|K$. And by (Comp) we get $i\indep \ell|jK$. However by (SG) we have $i\depend \ell|jK$ or $i\depend j|K$ from $i\depend j|\ell K$. Thus we have $i\depend j|K$.

Claim 3. $i\depend j| \Rightarrow i\indep \ell |$. By applying (MCI) we get $i\indep \ell |j $ and $j\indep \ell |j $ from $i\depend j|$. Then $i\indep \ell|$ follows from (Int).

By Claim~3, the ground set $E$ can be partitioned into 2-element sets $\{a_\iota,b_\iota \}$, $\iota\in[m_2]$ and singletons $\{c_\iota\}$, $\iota \in[m_1]$ such that $a_\iota\depend b_\iota|$ for any $\iota\in [m_2]$ and $i\indep j|$ if $i,j\in E$ are in different blocks. By Claim~1 and Claim~2, for any two subsets $K,K'\subseteq E\backslash ij$ we have $i\indep j|K$ iff $i\indep j|K'$. Therefore, the loopless matroid $M$ is isomorphic to the direct sum of $m_2$ rank one uniform matroids on $\{a_\iota,b_\iota\}$, $\iota\in[m_2]$ and $m_1$ rank one uniform matroids on $\{c_\iota\}$, $\iota\in[m_1]$.
\end{proof}

\section*{Acknowledgments}
The author would like to thank Thomas Kahle for his comments. Funded by the Deutsche Forschungsgemeinschaft (DFG, German Research Foundation) - 314838170, GRK 2297 MathCoRe.

\printbibliography
\end{document}